\documentclass[11pt]{article}
\usepackage{amsmath,amssymb,amsthm, latexsym,color,epsfig,enumerate,a4, graphicx}
\usepackage{changepage}

\usepackage{tikz}
\usetikzlibrary{patterns}
\usetikzlibrary{decorations.pathreplacing}

\usepackage{xspace}
\usepackage{bbding}
\usepackage[usenames,dvipsnames]{pstricks}
\usepackage[ruled,vlined, linesnumbered]{algorithm2e}

\oddsidemargin  0pt
\evensidemargin 0pt
\marginparwidth 40pt
\marginparsep 10pt
\topmargin 0pt
\headsep 0pt

\textheight 8.75in
\textwidth 6.6in

\theoremstyle{plain}
\newtheorem{theorem}{Theorem}[section]
\newtheorem{lemma}[theorem]{Lemma}
\newtheorem{claim}[theorem]{Claim}

\theoremstyle{definition}

\newcommand{\calA}{\ensuremath{\mathcal A}}
\newcommand{\calB}{\ensuremath{\mathcal B}}

\newcommand{\calE}{\ensuremath{\mathcal E}}

\newcommand{\calH}{\ensuremath{\mathcal H}}

\newcommand{\calS}{\ensuremath{\mathcal S}}



\date{}
\title{\vspace{-0.7cm}Star-factors in graphs of high degree}
\author{
Rajko Nenadov
	\thanks{
		School of Mathematical Sciences, Monash University, Melbourne, Australia. Email: {\tt rajko.nenadov@monash.edu}.
	} 
}

\begin{document}
\maketitle

\begin{abstract}
We prove that every graph with sufficiently large minimum degree $d$ contains a spanning forest in which every component is a star of size at least $\sqrt{d} -  \tilde O(d^{1/4})$. This improves the result of Alon and Wormald and is optimal up to the lower order term.
\end{abstract}

\section{Introduction}

Given graphs $G$ and $H$, a collection of pairwise vertex-disjoint copies of $H$ in $G$ (not necessarily induced) is called an \emph{$H$-packing}. This notion generalises matchings (i.e. $H = K_2$) to arbitrary configurations. A \emph{perfect} $H$-packing (or an \emph{$H$-factor}) is a packing which covers all the vertices of $G$. Starting with the seminal paper of Corr\'adi and Hajnal \cite{hajnal} and its extension by Hajnal and Szemer\'edi \cite{hajnal70sz}, the problem of determining the minimum degree condition which suffices for the existence of an $H$-factor has received considerable attention. After a series of papers (\cite{alon1992almosth,alon1996h,komlos2000tiling,komlos2001proof} to name a few), this line of research has culminated with the work of K\"uhn and Osthus \cite{kuhn2009minimum}, who resolved the problem for every $H$ up to an additive constant term.

Instead of asking for copies of the same graph $H$, one can ask for copies of graphs from some family $\mathcal{H}$. More precisely, a collection of vertex-disjoint subgraphs $H_1, \ldots, H_t \subseteq G$ is an \emph{$\calH$-packing} if $H_i \in \calH$ for every $i$. An \emph{$\calH$-factor} is defined analogously.

In this paper we are interested in the family of large \emph{stars}. A star of size $k$ is a complete bipartite graph with $k$ vertices on one side (called \emph{leaves}) and a single vertex on the other (called the \emph{centre}). Moreover, let $\calS_\ell$ denote the family of all stars of size at least $\ell$. In connection with the analysis of certain exponential time algorithms (see \cite{havet2011exact}), Alon and Wormald \cite{alon2010high} considered the problem of finding the largest $\ell = \ell(G)$ such that a graph $G$ contains an $\calS_\ell$-factor. Their main result states that $\ell = \Omega((d / \log d)^{1/3})$, where $d$ is the minimum degree of $G$. Here we improve upon this bound.

\begin{theorem} \label{thm:main}
	There exist $d_0, C \in \mathbb{N}$ such that every graph $G$ with minimum degree $d \ge d_0$ contains an $S_\ell$-factor for $\ell \ge \sqrt{d} - C d^{1/4} \sqrt{ \log d}$.
\end{theorem}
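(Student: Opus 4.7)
The plan is a two-stage random construction. Include each vertex in $C$ independently with probability $p = 1/\sqrt{d}$, and let $L := V(G) \setminus C$; then, for each $v \in L$, pick $f(v) \in N(v) \cap C$ uniformly at random. The output is the family of stars $\{(c, f^{-1}(c)) : c \in C\}$, which automatically covers $V(G)$ as a spanning forest. The task is to show that every star has size at least $\ell$, so that this is an $\calS_\ell$-factor.

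The analysis proceeds in two concentration steps, each combining a Chernoff bound with the Lov\'asz Local Lemma. First, I would show that, with positive probability, the partition is locally typical: for every $v \in V$, $|N(v) \cap C| = \sqrt{d} \pm t$ and $|N(v) \cap L| = (1-p)d \pm t$, where $t = O(d^{1/4}\sqrt{\log d})$. Each such bad event depends only on the coin flips of vertices in $N(v) \cup \{v\}$, so two events are dependent only when the corresponding vertices are within graph-distance $2$ (dependency degree $O(d^2)$); choosing $t$ as above makes the Chernoff tail $d^{-\Omega(1)}$, comfortably below the LLL threshold. Fix such a partition. Conditional on it, $|f^{-1}(c)|$ is a sum of independent Bernoullis with
\[
\Exp\bigl[|f^{-1}(c)|\bigr] \;=\; \sum_{v \in N(c) \cap L} \frac{1}{|N(v) \cap C|} \;\ge\; \frac{(1-p)d - t}{\sqrt{d} + t} \;=\; \sqrt{d} - 1 - O(d^{1/4}\sqrt{\log d}).
\]
A second Chernoff + LLL pass — now with dependency degree $O(d^4)$ coming from distance-$\le 4$ relations among centres — then forces $|f^{-1}(c)| \ge \sqrt{d} - O(d^{1/4}\sqrt{\log d})$ simultaneously for all $c \in C$, delivering the claim.

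The main obstacle is aligning the two LLL applications so that the error terms match the theorem's target. Requiring each bad event to beat the polynomial-in-$d$ dependency degree forces the Chernoff slack to be $\Omega(\sqrt{pd \log d}) = \Omega(d^{1/4}\sqrt{\log d})$, which is precisely the additive error appearing in the statement — so there is little room to waste. For cleanliness, both random stages can be merged into a single product probability space and LLL applied once to the combined list of local bad events. The remaining bookkeeping — the $-1$ from $(1-p)d/\sqrt{d}$, the upper-tail contribution of $|N(v) \cap C|$ in the denominator of the expectation above, and the Chernoff slack in the second step — is straightforward but must be tracked carefully to pin down the constant $C$.
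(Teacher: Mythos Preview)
Your argument works only when $G$ is (nearly) $d$-regular; in the general minimum-degree setting it breaks at both LLL applications.

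The first failure is the claim that $|N(v)\cap C|=\sqrt{d}\pm t$ for every $v$. With $p=1/\sqrt{d}$ the expectation of $|N(v)\cap C|$ is $\deg(v)/\sqrt{d}$, which equals $\sqrt{d}$ only when $\deg(v)=d$. For a vertex of degree $Kd$ the count concentrates around $K\sqrt{d}$, so the ``bad'' event $|N(v)\cap C|>\sqrt{d}+t$ has probability close to $1$, not $d^{-\Omega(1)}$. This feeds directly into your expectation bound for $|f^{-1}(c)|$: if $c$ has degree exactly $d$ but each neighbour $v$ of $c$ has degree $Kd$, then $|N(v)\cap C|\approx K\sqrt{d}$ and
\[
\Exp\bigl[|f^{-1}(c)|\bigr]\;\approx\;|N(c)\cap L|\cdot\frac{1}{K\sqrt{d}}\;\approx\;\frac{\sqrt{d}}{K},
\]
which can be made an arbitrarily small fraction of $\sqrt{d}$. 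A concrete instance is the bipartite graph with parts $A$ (degree $Kd$) and $B$ (degree $d$), $|B|=K|A|$: centres landing in $B$ get stars of expected size $\sqrt{d}/K$. So the star-size lower bound simply does not hold for this sampler.

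The second failure is the dependency count. Your bounds ``$O(d^2)$'' and ``$O(d^4)$'' presuppose that the maximum degree is $O(d)$. The hypothesis only controls the \emph{minimum} degree; a single vertex of huge degree makes the number of vertices within distance $2$ (or $4$) of a given vertex unbounded in terms of $d$, and the Local Lemma no longer applies.

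These are exactly the obstacles the paper's proof is built around. It first passes to an edge-minimal subgraph so that the set of vertices of degree $>d^5$ is independent, covers those vertices by a separate large-star packing (with a random ``borrowing'' step so the rest of the graph keeps degree $\ge d-5$), and only then runs an LLL argument inside a graph with maximum degree $\le d^5$. Even there, a further partition (the sets $D,D',H,A,B$ in Lemma~\ref{lemma:aux_cover}) is needed precisely because degrees are not uniform. If you want to salvage the one-shot random-assignment idea, you would at minimum need degree-dependent inclusion probabilities and a separate treatment of very-high-degree vertices; as written, the proposal does not prove the theorem.
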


We make no effort to optimise either $d_0$ or $C$. The next theorem shows that this is almost the best possible without additional assumptions on the graph $G$. 

\begin{theorem} \label{thm:lower}
	For every $d \in \mathbb{N}$ and $n$ sufficiently large there exists a graph $G$ with $n (1 + \lceil \sqrt{d} \rceil) + d$ vertices and minimum degree $d$ which does not contain an $\calS_\ell$-factor  for $\ell > \lceil \sqrt{d} \rceil + 1$.
\end{theorem}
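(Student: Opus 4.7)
The plan is to construct $G$ explicitly and then rule out $\calS_{s+2}$-factors combinatorially, where $s = \lceil\sqrt d\rceil$. I would take $V(G) = A \sqcup B_1 \sqcup \cdots \sqcup B_n$ with $|A| = d$ and each $|B_i| = s+1$; make $A$ a clique, each $B_i$ a clique, and for every $i$ join $B_i$ completely to some subset $A_i \subseteq A$ of size $d-s$. Every $B_i$-vertex then has degree $s + (d-s) = d$, and an $A$-vertex $a$ has degree $d-1 + (s+1)\,r(a)$, where $r(a) = |\{i : a \in A_i\}|$. Requiring each $a \in A$ to lie in at least one $A_i$ secures minimum degree~$d$.

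The subsets $A_i$, or equivalently their complements $S_i := A \setminus A_i$ of size $s$, are chosen so that $\{S_i\}_{i \in [n]}$ forms a covering design on $A$: every $T \subseteq A$ with $|T| \leq s$ is contained in some $S_i$. For $n$ sufficiently large this is easy to arrange by brute enumeration (take $n \geq \binom{d}{s}$ and let the $S_i$'s run over all $s$-subsets of $A$, possibly with repetition). Now suppose for contradiction that $G$ carried an $\calS_{s+2}$-factor with centre set $C$, and set $c_A = |C \cap A|$, $t_i = |C \cap B_i|$. The elementary bound $(s+3)|C| \leq n(s+1) + d$ gives $|C| < n$ once $n$ is large, so by pigeonhole some $B_i$ satisfies $t_i = 0$. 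In any such block all $s+1$ vertices are leaves whose only external neighbours lie in $A_i$, hence they must be assigned to centres in $C_A \cap A_i$; in particular $C_A \not\subseteq S_i$. On the other hand, for each block with $t_i \geq 1$ a direct accounting shows that the $B_i$-centres together demand at least $(s+3)t_i - (s+1)$ leaves from $A \setminus C_A$, pressing on the $A$-leaf budget $d - c_A$. The plan is to combine the covering property of $\{S_i\}$ with this leaf bookkeeping to derive a contradiction.

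The main obstacle will be executing this bookkeeping cleanly across all centre configurations. The delicate regime is when $c_A$ is just large enough that $C_A$ hits every $S_i$ (so no block is \emph{forced} to contain a centre); there the argument has to instead extract a sufficiently large contribution from the blocks with $t_i \geq 1$, showing that their total demand for leaves from $A$ exceeds the available supply $d - c_A$ once $n$ is large compared with~$d$. To make this work one may need to sharpen the covering condition — for instance by demanding that each small $T \subseteq A$ be contained in \emph{many} of the $S_i$'s — and then choose $n$ (as a suitable polynomial in~$d$) so that the resulting inequalities become inconsistent. Verifying the construction attains minimum degree exactly $d$ along $B$, and that every $A$-vertex ends up with degree at least $d$, is then routine given the choice of $\{A_i\}$.
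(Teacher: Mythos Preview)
Your construction does not prove the theorem: the graph you build \emph{contains} an $\calS_{s+2}$-factor. Take the centre set to be all of $A$; then every $B_i$-vertex is a leaf attached to some centre in $A_i$, and one only has to distribute the $n(s+1)$ block vertices so that every $a\in A$ receives at least $s+2$ of them. For the choice you propose (the $S_i$ running over all $s$-subsets of $A$, possibly repeated), each $a$ lies in the same large number of $A_i$'s, and a routine Hall argument yields such a distribution once $n$ is moderately large. Concretely, for $d=4$, $s=2$, $n=6$ with the $S_i$ equal to the six $2$-subsets of $A$, the four $A$-vertices become centres of stars of sizes $5,4,5,4$ covering all $22$ vertices. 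In this factor every $t_i=0$, so the leaf bookkeeping you outline for blocks with $t_i\ge1$ is vacuous, and the covering-design hypothesis is irrelevant since $c_A=d>s$; strengthening the covering condition cannot help either, as it only constrains centre sets of size at most $s$.

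The structural problem is that your small clique $A$, joined to almost all of each $B_i$, consists of vertices of huge degree --- they are \emph{easy} to deploy as centres, not hard to cover. The paper's construction reverses the roles. It uses three independent sets with $|A|=n$ large, $|B|=sn$, $|C|=d$; each $A$-vertex has exactly $d$ neighbours in $B$ and none in $C$; each $B$-vertex has at most $s$ neighbours in $A$ and is joined to all of $C$. Now $A$ is the bottleneck: a star centred in $A$ has all its leaves in $B$, so there are at most $|B|/(s+2)<n$ such centres; a star of size at least $s+2$ centred in $B$ has at most $s$ leaves in $A$ and hence at least two in $C$, so there are at most $d$ of them, covering at most $ds$ vertices of $A$ as leaves. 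For $n$ large this leaves some $A$-vertex uncovered, and no case analysis is needed.
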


In the case where $G$ is a $d$-regular graph, Alon and Wormald \cite{alon2010high} showed that there exists an $S_\ell$-factor for $\ell = \Omega(d / \log d)$. By considering the minimum size of a dominating set in random $d$-regular graphs, they also showed that this bound is optimal.

The related problem of \emph{decomposing} graphs into the minimum number of edge-disjoint $\calS_0$-factors (note that $S_0$ is just an isolated vertex), also known as the \emph{star-arboricity}, was first studied by Akiyama and Kano \cite{akiyama85}. The optimal bound on such a number in terms of the maximum degree of a graph was subsequently determined by Algor and Alon \cite{algor1989star} and Alon, McDiarmid, and Reed \cite{alon1992star}. As proposed in \cite{alon2010high}, instead of asking for the minimum number of $\calS_0$-factors it would be interesting to determine if there exists a growing function $\ell$ such that every graph $G$ with minimum degree $d$ can be decomposed into $\calS_{\ell(d)}$-factors.

The next section sets out the tools used in the proof of Theorem \ref{thm:main}, which is presented in Section \ref{sec:main_thm}. Section \ref{sec:lower_bound} gives the lower-bound construction of Theorem \ref{thm:lower}. Whenever the use of floors and ceilings is not important they will be omitted.

\section{Preliminaries}

Similarly to other proofs in this line of research, our main ingredient is the Lov\'asz Local Lemma (see \cite{spencer1977asymptotic}). For our purposes its simplest form suffices.

\begin{lemma}
	Let $\calA_1, \ldots, \calA_n$ be events in an arbitrary probability space. Suppose that each event $\calA_i$ is mutually independent of the set of all the other events but at most $d$, and $\Pr[\calA_i] \le p$ for all $1 \le i \le n$. If
	$$
		ep(d+1) \le 1
	$$ 
	then $\Pr[ \bigwedge_{i = 1}^{n} \overline{\calA_i} ] > 0$.
\end{lemma}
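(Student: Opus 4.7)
The plan is to carry out the standard inductive proof of the Lov\'asz Local Lemma in its symmetric form. Rather than prove the stated bound directly, I would first establish the following stronger auxiliary claim by induction: for every $i \in [n]$ and every $S \subseteq [n] \setminus \{i\}$,
\[
    \Pr\Bigl[\calA_i \,\Bigm|\, \bigwedge_{j \in S} \overline{\calA_j}\Bigr] \le \frac{1}{d+1}.
\]
Once this is established, the theorem follows by the chain rule:
\[
    \Pr\Bigl[\bigwedge_{i=1}^{n} \overline{\calA_i}\Bigr] = \prod_{i=1}^{n} \Pr\Bigl[\overline{\calA_i} \,\Bigm|\, \bigwedge_{j < i} \overline{\calA_j}\Bigr] \ge \Bigl(1 - \tfrac{1}{d+1}\Bigr)^{n} > 0.
\]

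The induction is on $|S|$. The base case $|S|=0$ reduces to $\Pr[\calA_i] \le p \le 1/(e(d+1)) \le 1/(d+1)$ by hypothesis. For the inductive step, partition $S = S_1 \sqcup S_2$, where $S_1$ consists of those $j \in S$ for which $\calA_j$ lies among the (at most $d$) events on which $\calA_i$ depends, and $S_2$ is the rest. If $S_1 = \emptyset$ the bound follows at once from mutual independence; otherwise expand the target as the ratio
\[
    \Pr\Bigl[\calA_i \,\Bigm|\, \bigwedge_{j \in S} \overline{\calA_j}\Bigr] = \frac{\Pr\bigl[\calA_i \wedge \bigwedge_{j \in S_1} \overline{\calA_j} \,\bigm|\, \bigwedge_{j \in S_2} \overline{\calA_j}\bigr]}{\Pr\bigl[\bigwedge_{j \in S_1} \overline{\calA_j} \,\bigm|\, \bigwedge_{j \in S_2} \overline{\calA_j}\bigr]}.
\]

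I would bound the numerator by dropping the $S_1$ conjunction, giving $\Pr[\calA_i \mid \bigwedge_{j \in S_2} \overline{\calA_j}] = \Pr[\calA_i] \le p$, where the equality uses that $\calA_i$ is mutually independent of the events indexed by $S_2$. For the denominator, I would enumerate $S_1 = \{j_1, \ldots, j_m\}$ with $m \le d$, apply the chain rule, and invoke the inductive hypothesis on each factor (each conditioning set has size at most $|S|-1$):
\[
    \Pr\Bigl[\bigwedge_{k=1}^{m} \overline{\calA_{j_k}} \,\Bigm|\, \bigwedge_{j \in S_2} \overline{\calA_j}\Bigr] = \prod_{k=1}^{m}\Bigl(1 - \Pr\bigl[\calA_{j_k} \,\bigm|\, \bigwedge_{l<k}\overline{\calA_{j_l}} \wedge \bigwedge_{j \in S_2}\overline{\calA_j}\bigr]\Bigr) \ge \Bigl(\tfrac{d}{d+1}\Bigr)^{\!m} \ge \Bigl(\tfrac{d}{d+1}\Bigr)^{\!d} \ge \tfrac{1}{e},
\]
the last inequality being the standard estimate $(1+1/d)^d \le e$. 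Combining, the ratio is at most $ep \le 1/(d+1)$, closing the induction.

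The only real subtlety, and the step I would pay most attention to, is the invocation of mutual independence in the numerator: one needs $\Pr[\calA_i \mid \bigwedge_{j \in S_2} \overline{\calA_j}] = \Pr[\calA_i]$, which is a statement about $\calA_i$ being independent of an event in the $\sigma$-algebra generated by $\{\calA_j : j \in S_2\}$, not merely pairwise independence from each $\calA_j$. This is precisely what the hypothesis "mutually independent of the set of all the other events but at most $d$" delivers. Everything else is bookkeeping, the key quantitative input being the choice of threshold $1/(d+1)$ (which maximises the product $x(1-x)^d$ up to the factor $1/e$ and thereby explains the appearance of $e$ in the statement).
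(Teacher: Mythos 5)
Your proposal is the standard inductive proof of the symmetric Lov\'asz Local Lemma and it is correct: the partition of the conditioning set into dependent and independent indices, the ratio bound $ep \le 1/(d+1)$ via $(d/(d+1))^d \ge 1/e$, and the final chain-rule product all go through (the only detail worth making explicit is that, along the same induction, the conditioning events $\bigwedge_{j\in S}\overline{\calA_j}$ have positive probability, so the conditional probabilities are well defined; the degenerate case $d=0$ is trivial and handled separately). The paper itself offers no proof to compare against --- it states the lemma as a known result with a citation --- so your argument is exactly the expected textbook one.
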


The following claim follows easily from Hall's matching criteria, thus we omit the proof.

\begin{claim} \label{claim:hall}
	Let $G = (V_1 \cup V_2, E)$ be a bipartite graph such that $\deg(v) \ge d_1$ for every $v \in V_1$ and $1 \le \deg(w) \le d_2$ for every $w \in V_2$, for some $d_1 \ge d_2$. Then $G$ contains an $S_\ell$-factor for $\ell = \lfloor d_1 / d_2 \rfloor$, with all centres being in $V_1$.
\end{claim}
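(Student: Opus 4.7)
The plan is to reduce the claim to Hall's theorem applied to a duplicated bipartite graph. Specifically, I would form an auxiliary bipartite graph $G'$ on $(V_1 \times [\ell]) \cup V_2$ with an edge between $(v,i)$ and $w$ whenever $vw \in E(G)$, and look for a matching saturating $V_1 \times [\ell]$. Such a matching gives, for every $v \in V_1$, a set $M_v \subseteq N_G(v)$ with $|M_v| = \ell$, all these sets pairwise disjoint. Any leftover vertex $w \in V_2$ still has $\deg_G(w) \ge 1$, so I can attach it as an additional leaf of any of its neighbours in $V_1$. The resulting stars centred at the vertices of $V_1$ are then pairwise vertex-disjoint, cover every vertex, and have size at least $\ell$, i.e.\ they form an $\calS_\ell$-factor.

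The only thing to check is Hall's condition in $G'$. For $S \subseteq V_1 \times [\ell]$, let $T \subseteq V_1$ be the projection onto the first coordinate, so $|S| \le \ell |T|$ and $N_{G'}(S) = N_G(T)$. Double counting the edges of $G$ between $T$ and $N_G(T)$ gives
$$
|T| d_1 \;\le\; e_G(T, N_G(T)) \;\le\; |N_G(T)|\, d_2,
$$
hence $|N_{G'}(S)| = |N_G(T)| \ge (d_1/d_2)\,|T| \ge \ell |T| \ge |S|$, using $\ell = \lfloor d_1/d_2 \rfloor$. Hall's theorem then yields the required matching.

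There is no real obstacle; the argument is essentially a one-line Hall computation once the correct duplication trick is chosen. The only minor subtlety worth flagging is the necessity to handle the $V_2$-vertices not picked up by the initial matching, which is why the hypothesis $\deg(w) \ge 1$ on $V_2$ is used.
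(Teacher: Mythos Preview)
Your argument is correct and is precisely the kind of Hall-based derivation the paper has in mind; indeed, the paper omits the proof entirely, remarking only that the claim ``follows easily from Hall's matching criteria.'' Your duplication trick and double-counting verification of Hall's condition are the standard way to carry this out, and your handling of the unmatched $V_2$-vertices via the hypothesis $\deg(w)\ge 1$ is exactly what is needed to upgrade the packing to a factor.
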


\section{Proof of Theorem \ref{thm:main}} \label{sec:main_thm}

In order to point out the difficulties, it is instructive to first look at the case where $G$ is a $d$-regular graph. We follow the proof of Alon and Wormald \cite{alon2010high}. First, we choose each vertex to be in a set $V_1$ with probability $\Theta(\log d / d)$, independently of all other vertices. We want to show that, with a positive probability,  the resulting set $V_1$ has the property that every vertex $v \in V(G)$ has at least one and at most $O(\log d)$ neighbours in $V_1$. In particular, the positive probability implies that such set $V_1$ indeed exists and the desired $\calS_\ell$-factor can be obtained using Claim \ref{claim:hall} with $V_2 = V(G) \setminus V_1$, $d_1 = d - O(\log d)$ and $d_2 = O(\log d)$.

Let us briefly discuss why can we expect that a randomly chosen set $V_1$ has such a property. First, since each vertex has degree $d$, the probability that it does not have a neighbour in $V_1$ is $(1 - \Theta(\log d / d))^{d} < d^{-\Theta(1)}$. On the other hand, Chernoff's inequality tells us the probability that it has significantly more than the expected number of neighbours, which is of order $\log d$, is exponentially small in $\Omega(\log d)$ and thus of order $d^{-\Omega(1)}$. Therefore, a typical vertex fails to have the desired property with probability $d^{-C}$ for some constant $C > 0$ of our choice. Note that if two vertices $v_1, v_2 \in V(G)$ do not have a common neighbour, then the events  `$v_1$ fails' and `$v_2$ fails' are pairwise independent. It is not difficult to see that $v_1$ is actually mutually independent of the set of all such events except those that involve a vertex which has a common neighbour with $v_1$. From the fact that the graph is $d$-regular we deduce that there are at most $d^2$ such events. The conclusion now follows from the Lov\'asz Local Lemma.

However, if the degrees are not equal then this strategy fails to produce a spanning bipartite graph in which the vertices on one side have degree close to $d$ while the vertices on the other have significantly smaller (though still non-zero). Rather than finding such a spanning bipartite graph at once, we proceed in steps and cover $G$ piece by piece. In the first step we find a star-packing $M_1$ which covers all the vertices of very high degree. Crucially, instead of removing the whole such packing from $G$ we first randomly prune its set of leaves to obtain a packing $M_1'$ such that each star is still sufficiently large and each vertex from $V(G) \setminus M_1$ has `good' degree into $G' := G \setminus M_1'$. Note that this is necessary as the graph $G \setminus M_1$ might contain isolated vertices (for example, if there exists a vertex whose neighbourhood is completely contained in the set of leaves from $M_1$).

Note that in the subsequent steps we only have to find a packing of $G'$ which covers $V(G) \setminus M_1$, rather than a factor which covers the whole $G'$. This follows from the simple observation that any uncovered vertex from $M_1 \setminus M_1'$ can be assigned back to the packing $M_1'$. In other words, we have  relaxed the problem from finding a spanning subgraph with certain properties to finding an almost-spanning one which covers certain subset. This is common to many proofs which deal with embeddings of spanning structures. In the next step we take care of the vertices of $V(G) \setminus M_1$ which do not have large degree into $M_1 \setminus M'$ (this is done in the Phase I of Lemma \ref{lemma:aux_cover}). Again, in order to be able to continue we randomly prune the set of leaves of the newly obtained packing. Finally, in the last step we cover the remaining vertices. This is done in the Phase II of Lemma \ref{lemma:aux_cover} and relies on Lemma \ref{lemma:bipartite}. We now make this precise.

\begin{lemma} \label{lemma:bipartite}
	Let $G = (V_1 \cup V_2, E)$ be a bipartite graph with $\deg(v) \ge d$ for every $v \in V_1$. Then $G$ contains an $\calS_{\lfloor \sqrt{d} \rfloor}$-packing which covers $V_1$.
\end{lemma}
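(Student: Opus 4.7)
The plan is to prove Lemma~\ref{lemma:bipartite} by induction on $|V_1|$, using Claim~\ref{claim:hall} as the workhorse. Set $s=\lfloor\sqrt{d}\rfloor$, so that $d\ge s^2$. The base case $|V_1|=0$ is vacuous; for the inductive step I would split according to whether $V_2$ contains a vertex of degree at least $s$.

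In the first case, suppose some $w\in V_2$ has $\deg(w)\ge s$. Then $\{w\}\cup N(w)$ is already an $\calS_s$-star, since $|N(w)|=\deg(w)\ge s$ leaves lie in $V_1$. Delete $w$ together with $N(w)$; any remaining vertex $v'\in V_1\setminus N(w)$ is non-adjacent to $w$, hence its degree is unaffected by the deletion and remains at least $d$. The inductive hypothesis applied to $(V_1\setminus N(w),V_2\setminus\{w\})$ produces an $\calS_s$-packing covering $V_1\setminus N(w)$, and combined with the new star this covers all of $V_1$.

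In the opposite case every $w\in V_2$ satisfies $\deg(w)\le s-1$. After discarding isolated $V_2$-vertices, Claim~\ref{claim:hall} applies to $(V_1,V_2)$ with $d_1=d$ and $d_2=s-1$: it produces an $\calS_\ell$-factor with centres in $V_1$, where $\ell=\lfloor d/(s-1)\rfloor$. Since $d\ge s^2$, one has $d/(s-1)\ge s^2/(s-1)\ge s+1$, hence $\ell\ge s$, and the factor is in particular an $\calS_s$-packing covering $V_1$.

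The whole point of the argument is that once all $V_2$-degrees are capped at $s-1$, Claim~\ref{claim:hall} finishes the job in a single stroke; the first case simply peels away the obstructing high-degree vertices one at a time, each iteration reducing $|V_1|$ by at least $s$, so termination is immediate. I do not expect a real obstacle here; the only mild subtlety is verifying that the hypothesis $\deg(v)\ge d$ is genuinely inherited by the remaining graph in Case~1, which uses crucially that the deleted $w$ is a non-neighbour of every surviving vertex $v'\in V_1\setminus N(w)$.
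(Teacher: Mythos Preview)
Your proof is correct and follows essentially the same approach as the paper's: greedily strip off stars centred in $V_2$ (you do this one at a time by induction, the paper does it all at once via a maximal $\calS_{\lfloor\sqrt d\rfloor}$-packing with centres in $V_2$), observe that the surviving $V_1$-vertices keep their full degree while the surviving $V_2$-vertices now have degree below $\lfloor\sqrt d\rfloor$, and finish with Claim~\ref{claim:hall}. The only difference is cosmetic framing.
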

\begin{proof}
	Consider a maximal (under vertex-inclusion) $\calS_{\lfloor \sqrt{d} \rfloor}$-packing $M$ with all centres being in $V_2$ (i.e. there is no such packing $M'$ with $M \subset M'$). First, note that there is no edge from $V_1 \setminus V(M)$ to $V_2 \cap V(M)$: otherwise, by appending an endpoint of such an edge from $V_1 \setminus V(M)$ to the corresponding star in $M$ we get a contradiction with the maximality of $M$. Therefore $\deg(v, V_2 \setminus V(M)) = \deg(v, V_2) \ge d$ for every $v \in V_1 \setminus V(M)$. On the other hand, every vertex $u \in V_2 \setminus V(M)$ has degree less than $\lfloor \sqrt{d} \rfloor$ into $V_1 \setminus V(M)$ as otherwise we could extend $M$ by a star of size $\lfloor \sqrt{d} \rfloor$ centred in such $u$. It follows now from Claim \ref{claim:hall} that the bipartite subgraph induced by $V_1 \setminus V(M)$ and $V_2 \setminus V(M)$ contains an $\calS_{\lfloor \sqrt{d} \rfloor}$-packing  which saturates $V_1 \setminus V(M)$. Together with $M$ this gives the desired star matching.
\end{proof}

The following lemma carries out the second and third step outlined in the strategy. Note that the assumption (a) is tailored to the later application of the lemma.

\begin{lemma} \label{lemma:aux_cover}
	Let $G$ be a graph with the maximum degree at most $d^5$ and suppose $S \subseteq V(G)$ is such that 
	\begin{enumerate}[(a)]
		\item $\deg(v) \ge d - 5$ for every $v \in V(G) \setminus S$, and
		\item $\deg(w) \le d$ for every $w \in S$.
	\end{enumerate}
	Then $G$ contains an $\calS_\ell$-packing which covers $V(G) \setminus S$, where
	$$
		\ell = \sqrt{d - 42 d^{3/4} \sqrt{\log d}}.
	$$
\end{lemma}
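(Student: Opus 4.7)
Set $\tau := 42\, d^{3/4}\sqrt{\log d}$ so that $\ell^2 = d - \tau$, and partition $V(G)\setminus S$ according to degree into $S$:
\[
B := \{v \in V(G)\setminus S : \deg_S(v) \ge \ell^2\}, \qquad A := (V(G)\setminus S)\setminus B.
\]
Combining the definition of $A$ with condition (a) yields $\deg_{V(G)\setminus S}(v) \ge (d-5)-(\ell^2-1) = \tau - 4$ for every $v \in A$. I will handle $B$ and $A$ in two phases and combine the resulting packings.

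For Phase~II, I apply Lemma~\ref{lemma:bipartite} to the bipartite subgraph between $B$ and $S$: since every $v \in B$ has $\deg_S(v) \ge \ell^2$, this produces an $\calS_{\lfloor\ell\rfloor}$-packing $\calM_B$ with centres in $S$ covering $B$. This packing lives inside $B \cup S$ and so is automatically disjoint from any packing contained in $G[V(G)\setminus S]$ that covers $A$.

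For Phase~I, I build a packing $\calM_A$ of $G[V(G)\setminus S]$ covering $A$ via random sampling and the Lov\'asz Local Lemma. Sample $V_1 \subseteq V(G)\setminus S$ by including each vertex independently with probability $p = 1/\sqrt{d}$; the set $V_1$ will play the role of the stars' centres. For each $v \in V(G)\setminus S$, let $\calE_v$ be the bad event that $\deg_{V_1}(v)$ deviates from its expectation $p \cdot \deg_{V(G)\setminus S}(v)$ by more than a Chernoff-type margin calibrated to give $\Pr[\calE_v] \le d^{-K}$ for a sufficiently large constant $K$. Because the maximum degree of $G$ is at most $d^5$, each $\calE_v$ depends on at most $d^{10}$ others, so the Local Lemma provides an instantiation of $V_1$ avoiding every bad event. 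Applying Claim~\ref{claim:hall} to the bipartite graph between $V_1$ (centres) and $A \setminus V_1$ (leaves), with $d_1 = d - O(d^{3/4}\sqrt{\log d})$ and $d_2 = \sqrt{d} + O(d^{1/4}\sqrt{\log d})$, a short calculation gives $\lfloor d_1/d_2 \rfloor \ge \ell$, which yields $\calM_A$. The union $\calM_A \cup \calM_B$ is the required $\calS_\ell$-packing covering $V(G)\setminus S$.

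I expect the main obstacle to lie in Phase~I. Specifically, the Hall hypothesis requires $\deg(v, A\setminus V_1) \ge d_1 \approx d$ for \emph{every} centre $v \in V_1$, yet a vertex of $V_1 \cap B$ may a priori have very few, or even zero, neighbours in $A$. The likely fix is to restrict the random sample to $A$ itself, or to prune ``bad'' centres from $V_1$ while verifying via an additional LLL event that each $w \in A \setminus V_1$ still retains a surviving $V_1$-neighbour. The wide permitted range of degrees (up to $d^5$) similarly forces careful tuning of the Chernoff/LLL step: to keep $\deg_{V_1}(w)$ of order $\sqrt{d}$ uniformly over $w \in A$, one may need vertex-dependent sampling probabilities rather than a single $p$.
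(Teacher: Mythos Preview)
Your Phase~II is fine, but Phase~I has a genuine gap that your proposed patches do not close.

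First, a small oversight: $\calM_B$ lives in $B\cup S$, and since $B\subseteq V(G)\setminus S$, it is \emph{not} automatically disjoint from a packing in $G[V(G)\setminus S]$. Disjointness forces $\calM_A\subseteq A$, i.e.\ you must sample $V_1\subseteq A$, which you mention as a fix; assume this from now on.

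The real problem is your claimed values of $d_1$ and $d_2$ in Claim~\ref{claim:hall}. For $d_1$: a vertex $v\in A$ satisfies $\deg_S(v)<\ell^2$, hence $\deg_{V(G)\setminus S}(v)\ge\tau-4$, but nothing prevents \emph{all} of those $\tau-4$ neighbours from lying in $B$ rather than $A$. Thus $\deg(v,A)$ can be $0$, and your centre side may be empty of usable edges; $d_1\approx d$ is unattainable, and even $d_1\approx\tau$ is not guaranteed. For $d_2$: a vertex $w\in A$ may have degree up to $d^5$, so under uniform sampling with $p=1/\sqrt d$ you only get concentration of $\deg_{V_1}(w)$ around $p\cdot\deg_A(w)$, which can be as large as $d^{4.5}$; you cannot force $d_2\approx\sqrt d$. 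Vertex-dependent probabilities do not obviously help, because controlling $\deg_{V_1}(w)$ for one $w$ requires knowing the degrees of all of $w$'s neighbours, and these are not bounded. Iterating the $A/B$ split (peel off $\{v\in A:\deg_A(v)\text{ small}\}$, etc.) runs into the same obstruction at the next level: those peeled vertices have many neighbours in $B$, but $B$ is already fully absorbed by $\calM_B$, over whose structure Lemma~\ref{lemma:bipartite} gives you no control.

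The paper's argument is organised quite differently and supplies exactly the missing ingredient. It first passes to an edge-minimal $G$, so that the set $H$ of vertices of degree $>d-5$ is independent and every vertex adjacent to $H$ has degree exactly $d-5$; this tames the ``degree up to $d^5$'' issue in the LLL step (dependencies route through a low-degree vertex, giving $\le d^6$ rather than $d^{10}$). It then uses a finer partition $V(G)\setminus S=H\cup D=A\cup B\cup D'$ engineered so that every vertex of $B\cup D'$ has $\ge d^{3/4}\sqrt{\log d}$ neighbours in $A\cup B$ and every vertex of $A\cup B$ has $\ge(1-o(1))d$ neighbours in $D\cup S$; this is what makes Claim~\ref{claim:hall} applicable with honest $d_1,d_2$. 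Crucially, the Phase~I stars have size $\sim d^{3/4}/\sqrt{\log d}\gg\ell$, creating slack: Phase~II then \emph{borrows back} some leaves (via a second LLL argument) so that Lemma~\ref{lemma:bipartite} can cover the remaining $A$ without colliding with Phase~I. Your outline aims for size exactly $\ell$ in Phase~I and leaves no room for such borrowing, which is why the overlap between the two phases cannot be resolved.
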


\begin{proof}
	Without loss of generality we may assume that $G$ is edge-minimal with respect to the property (a), that is, for every edge $e \in G$ there exists $v \in V(G) \setminus S$ such that $\deg_{G \setminus e}(v) < d - 5$. This implies that if two vertices are adjacent in $G$ then at least on of them is of degree exactly $d - 5$.

	The proof of the lemma relies on somewhat delicate partition of $V(G) \setminus S$, which we describe next. First, we partition $V(G) \setminus S$ into vertices of degree exactly $d - 5$ (the set $D$) and vertices of higher degree (the set $H$),
	\begin{align*}
		D &:= \{v \in V(G) \setminus S \colon \deg(v) = d - 5 \}, \\
		H &:= \{v \in V(G) \setminus S \colon \deg(v) > d - 5 \}.
	\end{align*}
	From the edge-minimality of $G$ we have that $H$ is an independent set.	Next, let $D'$ denote the set of vertices from $D$ with significant number of neighbours in $H$,
	$$
		D' := \{v \in D \colon \deg(v, H) \ge d^{3/4} \sqrt{\log d} \}.
	$$
	Finally, we partition $H \cup (D \setminus D')$ depending on the number of neighbours in $D' \cup S$ as follows,
	\begin{align*}
		A &:= \{v \in H \cup (D \setminus D') \colon \deg(v, D' \cup S) \ge d - 3d^{3/4} \sqrt{\log d}\}, \\
		B &:= (H \cup D) \setminus (D' \cup A).
	\end{align*}		
	As each vertex in $H \cup D$ has degree at least $d - 5$, we have
	\begin{alignat}{2}
		\deg(v, D \setminus D') &\ge d^{3/4} \sqrt{\log d}  &&\quad \text{ for every } \quad v \in B, \label{eq:B}\\
		\deg(v, D \cup S) &\ge d - 5 - d^{3/4} \sqrt{\log d} &&\quad \text{ for every } \quad v \in A \cup B. \label{eq:AB}
	\end{alignat}

	\begin{figure}[h!]
		\label{fig:partition}
		\center
		\begin{tikzpicture}[scale = 0.6]

			\draw [decorate,decoration={brace,amplitude=10pt, mirror}] (0,0) -- (16,0);
			\draw [rounded corners] (0,0) rectangle (16,4);		
			\node at (8,-1) {\Large $D$};

			\draw [decorate,decoration={brace,amplitude=10pt, mirror}] (16.3,0) -- (22.3,0);
			\draw [rounded corners] (16.3,0) rectangle (22.3,4); 	
			\node at (19.3,-1) {\Large $S$};

			\draw [decorate,decoration={brace,amplitude=10pt}] (0.1,7.6) -- (13.9,7.6);
			\draw [rounded corners] (0,4.5) rectangle (14, 7.5);	
			\node at (7,8.6) {\Large $H$};

			\node at (11.3,0.5) {$D'$};	
			\draw [thick, rounded corners] (10.7,0) rectangle (16,4);

			\draw [thick, rounded corners] (5.31, 0) -- (10.56, 0) -- (10.58, 4.1) -- (14, 4.5) -- (14, 7.5) -- (7.1, 7.5) -- (7.1, 4.5) -- (5.31, 4) -- cycle; 
			\draw (13.2,7) node[fill=white]{$A$};	

			\draw [thick, rounded corners] (0, 0) -- (5.2, 0) -- (5.2, 4.1) -- (6.95, 4.5) -- (6.95, 7.5) -- (0.0, 7.5) -- cycle; 
			\draw (0.6,7) node[fill=white]{$B$};	
		
			\draw [thick,fill=white] (7.5,5.5) ellipse (2.2 and 0.9);			
			\draw [thick,fill=white] (13,3) -- (7,5.5) -- (8.5,5.6) -- cycle;			
			\draw (7.5, 5.5) node[fill=white]{$d^{3/4} \sqrt{\log d}$};
			\draw (13,3) node[fill=black,circle,inner sep=2pt,minimum size=1pt] {};

			\draw [thick,fill=white] (16.7,2.9) ellipse (2.2 and 0.9);
			\draw [thick,fill=white] (12.5,5.5) -- (16,3) -- (17,3) -- cycle;
			\draw (16.7, 2.9) node[fill=white]{$(1 - o(1))d$};
			\draw (12.5,5.5) node[fill=black,circle,inner sep=2pt,minimum size=1pt] {};			

			\draw [thick,fill=white] (15.5,1) ellipse (2.2 and 0.9);
			\draw [thick,fill=white] (9,1.5) -- (15,0.5) -- (15.5,1) -- cycle;
			\draw (15.5, 1) node[fill=white]{$(1 - o(1))d$};
			\draw (9,1.5) node[fill=black,circle,inner sep=2pt,minimum size=1pt] {};

			\draw [thick,fill=white] (6.5,2.9) ellipse (2.2 and 0.9);
			\draw [thick,fill=white] (4,5.5) -- (6,2.8) -- (7,2.8) -- cycle;
			\draw (6.5, 2.9) node[fill=white]{$d^{3/4} \sqrt{\log d}$};
			\draw (4,5.5) node[fill=black,circle,inner sep=2pt,minimum size=1pt] {};

			\draw [thick,fill=white] (4.8,1.1) ellipse (2.2 and 0.9);
			\draw [thick,fill=white] (2,3) -- (4,1) -- (4.5,1.5) -- cycle;
			\draw (4.8, 1.1) node[fill=white]{$d^{3/4} \sqrt{\log d}$};	
			\draw (2,3) node[fill=black,circle,inner sep=2pt,minimum size=1pt] {};		
		\end{tikzpicture}
		\caption{Partition of the graph $G$.}
	\end{figure}
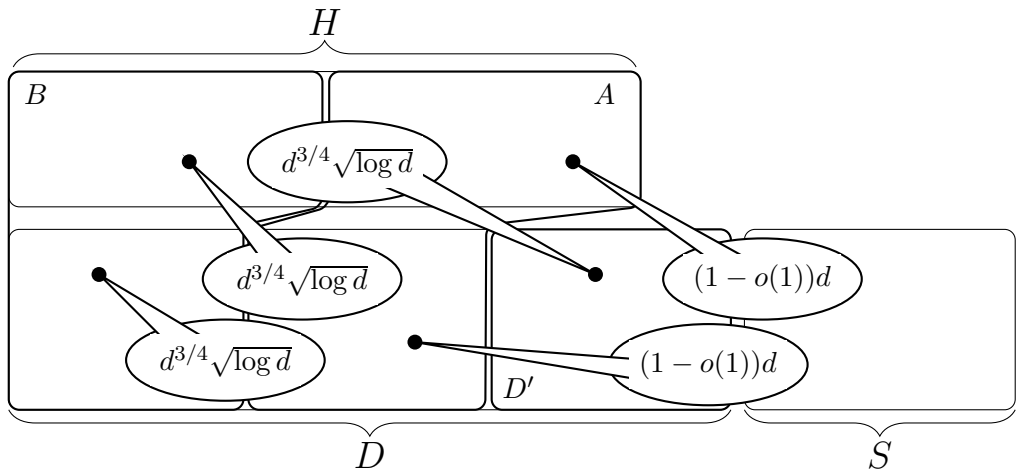

	The proof of the lemma goes in two phases. 

	\medskip
	\noindent
	\textbf{Phase I: covering $B \cup D'$.} In the first phase we find an $\calS_k$-packing which covers $B \cup D'$, where
	$$
		k \ge \frac{d^{3/4}}{10 \sqrt{\log d}},
	$$
	with all centres of stars being in $A \cup B$. We use similar strategy as described in the beginning of this section.

	To this end, let $V_1 \subseteq A \cup B$ be a random subset obtained by choosing each vertex with probability $(8 \sqrt{\log d}) / d^{3/4}$, independently of all other vertices. Using the Lov\'asz Local Lemma, we show that with positive probability $V_1$ does not have any of the following properties:
	\begin{enumerate}[($\calE_1(v)$)]
		\item a vertex $v \in B \cup D'$ has no neighbour in $V_1$, and
		\item a vertex $v \in V(G)$ has at least 
		$$
			\max\{ d, \deg(v) \} \cdot \frac{9 \sqrt{\log d}}{d^{3/4}}
		$$
		neighbours in $V_1$.		
	\end{enumerate}
	Note that an event $\calE_i(v)$ is mutually independent of the set of all events $\calE_j(u)$ except those such that $v$ and $u$ have a common neighbour. Since $H$ is an independent set and all other vertices have degree at most $d$, this implies that $\calE_i(v)$ is mutually independent of the set of all but at most $d^6$ events. Therefore, in order to applying the Local Lemma it suffice to show $\Pr[\calE_i(v)] < 1/d^8$.

	From the definition of $D'$ and \eqref{eq:B} we have that every vertex $v \in B \cup D'$ has at least $d^{3/4} \sqrt{\log d}$ neighbours in $A \cup B$. The probability that it has no neighbours in $V_1$ is at most 
	$$
		\Pr[\calE_1(v)] \le \left(1 - \frac{8 \sqrt{\log d}}{d^{3/4}}\right)^{d^{3/4} \sqrt{\log d}} < e^{- 8 \log d} = 1 / d^8.
	$$
	Next, note that the number of neighbours of a vertex $v \in V(G)$ in $V_1$ is binomially distributed with the expected value of at most
	\begin{equation} \label{eq:expexted}
		\mathbb{E}[\deg(v, V_1)] \le \deg(v) \frac{8\sqrt{\log d}}{d^{3/4}}.
	\end{equation}
	Therefore the desired bound easily follows from Chernoff's inequality,
	$$
		\Pr\left[ \deg(v, V_1) \ge  \max\{d, \deg(v)\} \frac{9 \sqrt{\log d}}{d^{3/4}} \right] \le e^{-\Omega(\max\{d, \deg(v)\} \frac{\sqrt{\log d}}{{d^{3/4}}})} \le e^{- \Omega(d^{1/4} \sqrt{\log d}) }< 1/d^8.
	$$
	This shows that, with positive probability, $V_1$ is such that none of the events $\calE_i(v)$ occur. As a consequence there exists a particular set $V_1$ for which this is the case.

	To summarise, we showed that there exists $V_1 \subseteq A \cup B$ such that
	\begin{itemize}
		\item $\deg(v, V_1) \le 9d^{1/4} \sqrt{\log d}$ for all $v \in D \cup S$ (follows from $\overline{\calE_2(v)}$ and the maximum degree of vertices in $D \cup S$), 
		\item $\deg(v, (D \cup S) \setminus V_1) \ge 0.99d$ for all $v \in A \cup B$ (follows from \eqref{eq:AB} and $\overline{\calE_2(v)}$), and
		\item each vertex from $B \cup D'$ has a neighbour in $V_1$ (follows from $\overline{\calE_1(v)}$).
	\end{itemize}
	Let $V_2 \subseteq (D \cup S) \setminus V_1$ denote the set of vertices with at least one neighbour in $V_1$. From the last property we have
	$$
		((B \cap D) \cup D') \setminus V_1 \subseteq V_2.
	$$
	We can now apply Claim \ref{claim:hall} on the bipartite graphs induced by $V_1$ and $V_2$, with $0.99d$ (as $d_1$) and $9d^{1/4} \sqrt{\log d}$ (as $d_2$). This gives us an $\calS_k$-factor of such bipartite graph with each vertex from $V_1$ being the centre of a star. Finally, as each vertex in $(B \cap H) \setminus V_1$ has a neighbour in $V_1$, we can assign it to the corresponding star. This gives the desired $\calS_k$-packing which covers $B \cup D'$, with all centres being in $V_1 \subseteq A \cup B$.

	\medskip
	\noindent
	\textbf{Phase II: covering $A$.} Let $C \subseteq A \cup B$ denote the set of centres and $L$ the set of leaves in the $\calS_k$-packing obtained in the previous phase. We aim to use Lemma \ref{lemma:bipartite} to cover the vertices in $V_1 := A \setminus (C \cup L)$ (this set should not be confused with the set $V_1$ from the previous phase). In order to avoid any clashes with the previously defined packing we would ideally like to apply Lemma \ref{lemma:bipartite} on the bipartite graph induced by $V_1$ and $(D' \cup S) \setminus L$. However, even though every vertex in $V_1$ has sufficiently large degree into $D' \cup S$ (by the definition of $A$), it might be that for some of them all the neighbours are contained in $L$. In other words, we have no guarantee on the minimum degree from the side of $V_1$. We overcome this by `borrowing' some vertices from $L$ (similar idea is used later in the proof of Theorem \ref{thm:main}).

	For each $u \in C$, let $L(u) \subseteq L \cap (D' \cup S)$ denote the set of leaves from $D' \cup S$ which belong to the star centred in $u$. We show that there exists a subset $L' \subseteq D' \cup S$ such the following holds:
	\begin{enumerate}[(i)]
		\item $|L(u) \cap L'| \ge 2\sqrt{d}$ for all $u \in C$, and
		\item $\deg(v, (D' \cup S) \setminus L') \ge d - 42 d^{3/4} \sqrt{\log d}$ for all $v \in V_1$.
	\end{enumerate}
	The first property ensures that we can freely use the vertices from $(D' \cup S) \setminus L'$ without significantly disturbing the packing obtained in the first phase. The second property allows us to apply Lemma \ref{lemma:bipartite} to cover $V_1$. 

	Having such a set $L'$, we finish the proof as follows. First, from each star obtained in the first phase we remove all the vertices except those that belong to $L'$. By the property (i) this results with the $\calS_{(2 \sqrt{d})}$-packing which does not contain any vertex from $(D' \cup S) \setminus L'$. Now apply Lemma \ref{lemma:bipartite} on the bipartite graph induced by $V_1$ and $(D' \cup S) \setminus L'$  (as $V_2$), with $d - 42 d^{3/4} \sqrt{\log d}$ (as $d$). This gives us an $\calS_\ell$-packing of such bipartite graph which covers $V_1$. Finally, we assign all the vertices from $V_2$ which are not part of this packing back to the stars centred in $C$. All together we obtain an $\calS_\ell$-packing which is guaranteed to cover all the vertices from $V(G) \setminus S$, as required.

	It remains to show that such set $L'$ exists. Let $L'$ be a random subset of $L \cap (D' \cup S)$ obtained by choosing each vertex with probability $30d^{-1/4}{\sqrt{\log d}}$, independently of all other vertices. The expected size of $|L(u) \cap L'|$ is at least $3\sqrt{d}$ and the expected number of neighbours of $v \in V_1$ in $L'$ is at most 
	$$
		\mathbb{E}[\deg(v, L')] \le 30 \max\{d, \deg(v)\} d^{-1/4} \sqrt{\log d} = \Omega(d^{3/4} \sqrt{\log d}).
	$$
	Chernoff's inequality shows that the probability of the actual number being smaller (in the first case) or bigger (in the second) by a factor of $\pm 1/3$ is at most $e^{-\Omega(\sqrt{d})}$, with room to spare. Similarly as before, each such event involving a vertex $v$ is mutually independent of all other events except those involving a vertex $w$ which has a common neighbour with $v$, which counts to at most $d^6$ events (similarly as in the Phase I). Therefore, the existence of $L'$ for which none of these events happen follows from the Lov\'asz Local Lemma and it is easy to see that such $L'$ satisfies properties (i) and (ii).
\end{proof}

With Lemma \ref{lemma:aux_cover} at hand we are ready to prove the main theorem.

\begin{proof}[Proof of Theorem \ref{thm:main}]
	Let $G$ be a graph with minimum degree at least $d$. Without loss of generality we may assume that $G$ is edge-minimal, i.e. removal of any edge results in a graph with minimum degree $d - 1$.

	Let $H \subseteq V(G)$ denote the set of vertices of very high degree,
	$$
		H := \{v \in V(G) \colon \deg(v) > d^5 \},
	$$
	and let $W \subseteq V(G)$ be the set of vertices adjacent to $H$. From the edge-minimality of $G$ we have that $H$ is an independent set and every vertex in $W$ is of degree exactly $d$. Therefore, it follows from Claim \ref{claim:hall} applied on the bipartite graph induced by $H$ (as $V_1$) and $W$ (as $V_2$) that there exists an $\calS_{d^4}$-packing which covers $H$ and $W$, with all centres being in $H$. For each $u \in H$, let $L(u)$ be an arbitrary subset of size $d^4$ of the leaves associated with the star centred in $u$.

	Next we wish to use Lemma \ref{lemma:aux_cover} to obtain a star-packing which covers all the remaining vertices $V' := V(G) \setminus (H \cup W)$. To do that, we first `borrow' a subset $S \subseteq W$ such that the following holds:
	\begin{enumerate}[(i)]
		\item $|L(u) \cap S| \le d^4 - d$ for all $u \in H$ (i.e. we do not significantly disturb the already built packing), and 
		\item $\deg(v, V' \cup S) \ge d - 5$ for all $v \in V'$.
	\end{enumerate}
	Assuming that we have such a set $S$, we finish the proof as follows. From each star centred in $H$ we first remove all leaves which belong to $S$. By property (i), this leaves us with an $\calS_{d}$-packing which covers $H \cup (W \setminus S)$ (in particular, these stars are still significantly larger than required). Next, from Lemma \ref{lemma:aux_cover} we have that the induced subgraph $G[V' \cup S]$ contains an $\calS_\ell$-packing, where
	$$
		\ell \ge \sqrt{d - 42 d^{3/4} \sqrt{\log d}} \ge \sqrt{d} - 42 d^{1/4} \sqrt{\log d}, 
	$$ 
	which covers all vertices in $V' \cup S$ save some subset $S' \subseteq S$. Finally, assigning vertices in $S'$ back to stars centred in $H$ we obtain the desired $\calS_\ell$-factor. It remains to show that such $S$ exists.

	First, for each $v \in V'$ such that $\deg(v, V') < d - 5$ choose an arbitrary subset $\Gamma(v) \subseteq N(v, W)$ of size $d - \deg(v, V') \ge 6$ (here, $N(v, W)$ denotes the set of neighbours of $v$ in $W$). For all other $v \in V'$ put $\Gamma(v) := \emptyset$. Let $S \subseteq W$ be a random subset of vertices obtained by choosing each vertex of $W$ with probability $1 - 1/d^2$, independently of all other vertices. For each $u \in H$ let $\calA(u)$ denote the event that $|L(u) \cap S| > d^4 - d$, and for each $v \in V'$ let $\calB(v)$ denote the event that $|\Gamma(v) \setminus S| \ge 6$. We claim that with positive probability none of the events $\calA(u)$ and $\calB(v)$ happen, which implies that there exists $S$ which satisfies (i) and (ii).

	Recall that $|L(u)| = d^4$. We bound the probability $\Pr[\calA(u)]$ as follows,
	$$
		\Pr[\calA(u)] = \Pr[|L(u) \cap S| > d^4 - d] \le \binom{d^4}{d^4 - d} (1 - 1/d^2)^{d^4 - d} \le \binom{d^4}{d} e^{-d^2 + 1/d} < 1/ed^5,
	$$
	where in the last inequality we assumed $d$ is sufficiently large. The bound on $\Pr[\calB(v)]$ is obtained in a similar way,
	$$
		\Pr[\calB(v)] = \Pr[|\Gamma(v) \setminus S| \ge 6] \le \binom{|\Gamma(v)|}{6} (1 / d^2)^6 \le \binom{d}{6} d^{-12} < 1/ed^5.
	$$
	Next, note that $\calA(u)$ is mutually independent of the set of all events $\calA(u')$ and $\calB(v)$ except those $\calB(v)$ for which $L(u) \cap \Gamma(v) \neq \emptyset$. Similarly, $\calB(v)$ is mutually independent of the set of all events $\calA(u)$ and $\calB(v')$ except those that satisfy $\Gamma(v) \cap L(u) \neq \emptyset$ and $\Gamma(v) \cap \Gamma(v') \neq \emptyset$, respectively. Since each vertex in $W$ has degree $d$, from $|\Gamma(v)| < |L(u)| = d^4$ we conclude that every event is mutually independent of the set of all other events but at most $d^5$. Therefore, by the Lov\'asz Local Lemma we have that with positive probability none of the events happen. This concludes the proof.
\end{proof}

\section{The lower-bound construction} \label{sec:lower_bound}

In this section we construct graphs with minimum degree $d$ which do not contain an $\calS_\ell$-factor for $\ell > \lceil \sqrt{d} \rceil + 1$.  As already mentioned, this shows that the bound given by Theorem \ref{thm:main} is optimal up to the lower order term.

\begin{proof}[Proof of Theorem \ref{thm:lower}]
	Let $G$ be any graph with the vertex set $V(G) = A \cup B \cup C$, where 
	$$
		|A| = n, \; |B| = \lceil \sqrt{d} \rceil n \; \text{ and } \; |C| = d,
	$$
	such that the following holds:
	\begin{enumerate}[(i)]
		\item $A$, $B$ and $C$ are independent sets,
		\item every vertex $v \in A$ has $d$ neighbours in $B$ and no neighbours in $C$,
		\item every vertex $w \in B$ has at most $\lceil \sqrt{d} \rceil$ neighbours in $A$,
		\item every two vertices from $B$ and $C$ are adjacent (i.e. $G[B, C]$ is a complete bipartite graph).
	\end{enumerate}
	Clearly, $G$ has minimum degree $d$. Note that such a graph can be constructed sequentially by making a vertex $v \in A$ adjacent to an arbitrary subset of $d$ vertices in $B$ with the smallest degree. 

	We show that $G$ does not contain an $\calS_\ell$ factor for $\ell \ge \lceil \sqrt{d} \rceil + 2$. Note that a star centred in $A$ has all leaves in $B$, thus the maximum number of vertex disjoint stars of size $\ell$ with the centre in $A$ is at most 
	$$
		|B| / (\sqrt{d} + 2) \le \frac{(\sqrt{d} + 1)n}{\sqrt{d} + 2}.
	$$
	On the other hand, as each vertex in $B$ has at most $\lceil \sqrt{d} \rceil$ neighbours in $A$ we conclude that an $S_\ell$ centred in $B$ has at least two leaves in $C$. This implies that there can be at most $d$ such vertex-disjoint stars and their neighbourhood covers at most $d \lceil \sqrt{d} \rceil$ vertices in $A$. However, for $n$ sufficiently large we have 
	$$
		d \lceil \sqrt{d} \rceil + \frac{(\sqrt{d} + 1)n}{\sqrt{d} + 2} < n,
	$$
	and therefore there always exists a vertex in $A$ which is neither a centre nor a leaf. This concludes the proof.
\end{proof}

\bibliographystyle{abbrv}
\bibliography{refs}

\end{document}